\documentclass[a4paper,11pt]{article}
\usepackage{amsmath}
\usepackage{amsfonts}
\usepackage{amssymb}
\usepackage{amsthm}
\usepackage{fullpage}
\usepackage{booktabs}
\usepackage{cancel}
\usepackage{color}
\usepackage{xspace}
\usepackage{calc}

\theoremstyle{plain}
\newtheorem{theorem}{Theorem}
\newtheorem{lemma}[theorem]{Lemma}
\newtheorem{corollary}[theorem]{Corollary}
\newtheorem{proposition}[theorem]{Proposition}
\newtheorem{observation}[theorem]{Observation}

\newcommand{\defn}{\emph}
\newcommand{\veqdots}{\mathrel{\makebox[\widthof{=}]{\vdots}}}

\newcommand{\upperchi}{\hat\chi}

\title{The Namer--Claimer game}
\author{Ben Barber\footnote{School of Mathematics, University of Bristol and Heilbronn Institute for Mathematical Research, Bristol, UK.  {\tt b.a.barber@bristol.ac.uk}}
}

\begin{document}

\maketitle

\begin{abstract}
In each round of the Namer--Claimer game, Namer names a distance $d$, then Claimer claims a subset of $[n]$ that does not contain two points that differ by $d$.
Claimer wins once they have claimed sets covering $[n]$.
I show that the length of this game is $\Theta(\log \log n)$ with optimal play from each side.

\end{abstract}

\section{Introduction}\label{intro}

Consider the following game played on $[n] = \{1,\ldots,n\}$ between two players, Namer and Claimer.
In each round, Namer names a distance $d \in \mathbb N$, then Claimer claims a subset of the unclaimed vertices which is `$d$-free'; that is, does not contain two points that differ by $d$.
Claimer wins once they have claimed the entirety of $[n]$.
How quickly can Claimer win with optimal play from both players?

It is not too difficult to prove that the length of the game has an order of growth somewhere between $\log \log n$ and $\log n$ (see Section~\ref{sec:greedy}).
For several years I have been asking at workshops and open problem sessions (including~\cite{Berlin,AC}) which if either of these values is correct.
It turns out that the lower bound is the truth.

\begin{theorem}\label{main}
The length of the Namer--Claimer game on $[n]$ is $\Theta(\log \log n)$.
\end{theorem}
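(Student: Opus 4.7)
The intro reports that the easy bounds $\Omega(\log\log n) \le \text{length} \le O(\log n)$ are both established in \sect{sec:greedy}, so proving \thm{main} reduces to improving the upper bound to $O(\log\log n)$: I need to exhibit a Claimer strategy winning in $O(\log\log n)$ rounds against any Namer play.

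The greedy strategy (claim a maximum $d_k$-free subset of $S_{k-1}$) merely halves $|S_k|$ each round, because it typically leaves behind a highly structured residual---in the worst case an arithmetic progression---on which Namer's next distance is again an essentially tight constraint. My plan is to design a more conservative Claimer strategy that maintains a structural invariant on the unclaimed set $S_k$: for example, that $S_k$ contains no long arithmetic progression, or that $S_k$ sits inside a low-dimensional generalised AP of controlled size. Under such an invariant, for any distance $d_{k+1}$ the number of $d_{k+1}$-pairs inside $S_k$ is limited, so Claimer can play a $d_{k+1}$-free subset whose complement is both much smaller than $|S_k|$ and again satisfies a (suitably strengthened) invariant. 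Iterating should yield a doubly-exponential recursion of the form $\log|S_{k+1}| \le (1-\varepsilon)\log|S_k| + O(1)$, producing $|S_k| \le 1$ by round $O(\log\log n)$. Concretely, given $d_{k+1}$, Claimer would split $S_k$ along residue classes mod $d_{k+1}$, keep in each class an alternating ``every-other-block'' piece, and verify that the surviving $S_{k+1}$ inherits a refined invariant.

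The main obstacle is identifying the right invariant. It needs to be strong enough to defeat the worst distance Namer could name---including, critically, any common difference of a long AP sitting inside $S_k$---and robust enough to be preserved under the residue-class partitioning that Claimer uses to choose the $d_{k+1}$-free subset. I suspect one must simultaneously track more than one parameter of $S_k$ (e.g.\ its size, longest AP, arithmetic diameter) and balance these in a careful potential argument; naive single-parameter attempts fail because, as noted above, an AP $S_k$ matched with $d_{k+1}$ equal to its common difference admits only halving. Once the invariant is pinned down and the strategy is explicitly described, the quantitative bookkeeping that produces the doubly-exponential recurrence should follow relatively mechanically, at which point the theorem reduces to the already-known lower bound from \sect{sec:greedy}.
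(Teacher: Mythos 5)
There is a genuine gap: your proposal correctly reduces the theorem to finding a Claimer strategy that wins in $O(\log\log n)$ rounds, but it does not contain such a strategy. The ``right invariant'' you defer to future work is the entire content of the proof, and the candidates you float are problematic. Your target recursion $\log|S_{k+1}|\le(1-\varepsilon)\log|S_k|+O(1)$, i.e.\ $|S_{k+1}|\lesssim|S_k|^{1-\varepsilon}$, is already impossible in the first round: $G_1[[n]]$ is a path, so whatever $1$-free set Claimer takes, at least $\lfloor n/2\rfloor$ points survive, which is far larger than $n^{1-\varepsilon}$. The decay one can actually hope for is in the density $\alpha_k=|S_k|/n$, namely $\alpha_{k+1}\approx\alpha_k^2$, and Namer's greedy strategy (Proposition~\ref{greedy}(ii)) shows this is best possible. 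The paper explicitly discusses why arranging such quadratic density decay is hard: after Claimer responds to $d_1=1$, the residual set has the wrong pair statistics at distance $2$, and similar dependencies arise whenever the named distances satisfy arithmetic relations. Your plan of ``split into residue classes mod $d_{k+1}$ and keep alternating blocks'' is exactly the kind of structured response that creates these correlations, and you give no argument that any single-set invariant (AP-freeness, bounded GAP dimension, etc.) survives it.

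The paper's actual route avoids tracking the residual set's statistics altogether. It uses the \emph{lazy strategy}, under which $A_k\neq\emptyset$ iff $A_0$ contains a Hilbert cube $H(x;d_1,\ldots,d_k)$; a random bipartition of $[n]$ kills all \emph{non-degenerate} cubes of dimension $\approx\log_2\log_2 n$ (Lemma~\ref{proper}); geometrically growing distances force non-degeneracy (Observation~\ref{spaced}); and a composition lemma (Lemma~\ref{parallel}) multiplies together a strategy that works when Namer uses at most one distance per dyadic block $[2^{i-1}+1,2^i]$ with a trivial $3$-round strategy for each individual block. The degenerate cubes (long intervals, APs) that defeat the purely random approach are thus handled combinatorially by the dyadic decomposition rather than by a potential-function invariant. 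None of these ingredients appears in your sketch, so as written the upper bound --- and hence the theorem --- is not established.
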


In Section~\ref{proof} I give a direct proof of Theorem~\ref{main}.
In Section~\ref{ramsey} I describe connections with the Ramsey theory of Hilbert cubes.

It is convenient to rephrase the game in graph theoretic language.
For $d \in D=[n-1]$, let $G_d$ be the graph on $V = [n]$ with $xy \in E(G_d)$ if and only if $|x-y|=d$.
For a subset $A$ of $V$, write $G_d[A]$ for the subgraph of $G_d$ induced by $A$.
The \defn{Namer--Claimer game on $\mathcal G = (G_d)_{d \in D}$} proceeds as follows.
\begin{itemize}
\item Set $A_0 = V$.
\item In the $i$th round:
\begin{itemize}
\item Namer names a $d_i \in D$.
\item Claimer selects an independent set $C_i$ of $G_{d_i}[A_{i-1}]$.
\item Set $A_i = A_{i-1} \setminus C_i$.
\item If $A_i = \emptyset$, then the game ends.
      Otherwise, continue to the next round.
\end{itemize}
\end{itemize}

The outcome of this process is a partition of $[n]$ into sets $C_1, \ldots, C_k$ such that each $C_i$ is an independent set in $G_{d_i}$.
As such, the Namer--Claimer game can be viewed as a finite, online version of the following variant colouring problem.
The \defn{upper chromatic number} $\upperchi(\mathcal G)$ of a family of graphs $\mathcal G = (G_d)_{d \in D}$ on $V$ is the least $k$ such that, for every $(d_1,\ldots, d_k) \in D^k$, there is a partition $V = C_1 \cup \cdots \cup C_k$ such that each $C_i$ is an independent set in $G_{d_i}$.
This upper chromatic number was introduced by Greenwell and Johnson~\cite{GJ} for the family of `distance-$d$' graphs on $\mathbb R^n$: $xy$ is an edge of $G_d$ if and only if $\|x-y\|_2=d$.
(The term `upper chromatic number' has also been used by Voloshin~\cite{Voloshin} and later authors for an unrelated concept.)
Greenwell and Johnson proved that $\upperchi(\mathbb R) = 3$.
This remains the only known value of $\upperchi(\mathbb R^n)$; it is not even known whether $\upperchi(\mathbb R^2)$ is finite.
See \cite{Abrams1,Archer,Abrams2} for related results.

The rules above define a Namer--Claimer game for any finite vertex set $V$ and family $\mathcal G$ of graphs on $V$.
One natural instantiation is to take $V$ to be a group $H$ of order $n$, $D = H\setminus\{0\}$ and $G_d = \{\{x,dx\} : x \in H\}$.
The $\Omega(\log \log n)$ lower bound from Proposition~\ref{greedy} is still valid in this setting.
If $H$ is either cyclic or of the form $C_t^m$ with $t$ fixed, then similar methods to those used here show that Claimer has a strategy to end the game in $O(\log \log n)$ rounds.
What happens more generally?

\section{Proof of Theorem~\ref{main}}\label{proof}

We assume freely for convenience that $n$ is larger than some absolute but unspecified constant, but this constant is not particularly large.

\subsection{Greedy bounds}\label{sec:greedy}

To prove an upper bound on the length of the game we must give a strategy for Claimer.
To prove a lower bound we must give a strategy for Namer.
The easiest strategies to analyse are often greedy strategies.

There are various ways `the greedy strategy' for each player could be made precise.
We shall say that
\begin{itemize}
\item the \defn{greedy strategy for Claimer} is to take as many points as possible each round;
\item the \defn{greedy strategy for Namer} is to name the most commonly occurring distance between the set of remaining points in each round.
\end{itemize}

\begin{proposition}\label{greedy}
By playing their respective greedy strategies,

(i) Claimer can win in at most $1 + \log_2 n$ rounds;

(ii) Namer can prolong the game at least $(1+o(1))\log_2 \log_2 n$ rounds.
\end{proposition}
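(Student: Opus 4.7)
For part (i), I would exploit the fact that for each $d \in [n-1]$ the graph $G_d$ decomposes into $d$ paths, one per residue class modulo $d$, so $G_d[A]$ is a disjoint union of paths for every $A \subseteq [n]$. A path on $\ell$ vertices has an independent set of size $\lceil \ell/2 \rceil$, so the maximum independent set of $G_d[A]$ has size at least $\lceil |A|/2 \rceil$. Greedy Claimer therefore at least halves the unclaimed set each round, and so $A_k = \emptyset$ once $k \ge \lfloor \log_2 n \rfloor + 1$.

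For part (ii), my plan is to show that, whatever Claimer does, greedy Namer forces the recurrence $|A_i| \ge |A_{i-1}|^2/(8n)$. Write $a = |A_{i-1}|$. The $\binom{a}{2}$ pairs in $A_{i-1}$ realise at most $n-1$ distinct distances, so by pigeonhole some $d$ occurs as $|x-y|$ for at least $m \ge \binom{a}{2}/(n-1) \ge a^2/(4n)$ pairs $\{x,y\} \subseteq A_{i-1}$ (using $a \ge 2$); greedy Namer names this $d$. The graph $G_d[A_{i-1}]$ is again a disjoint union of paths, now carrying $m$ edges in total, and since a path on $\ell$ vertices has a matching of size $\lfloor \ell/2 \rfloor \ge (\ell-1)/2$, the matching number of $G_d[A_{i-1}]$ is at least $m/2$. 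Hence its maximum independent set has size at most $a - m/2$, and so $|A_i| \ge m/2 \ge a^2/(8n)$ regardless of how Claimer responds.

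The recurrence is easy to iterate: setting $c_i = 8n/|A_i|$ turns it into $c_i \le c_{i-1}^2$ with $c_0 = 8$, so $c_k \le 8^{2^k}$. As long as $c_k \le 8n$ we have $|A_k| \ge 1$, and this is guaranteed while $2^k \le 1 + \log_8 n$, i.e., for $k \le \log_2 \log_2 n - O(1)$. Hence the game lasts at least $(1+o(1)) \log_2 \log_2 n$ rounds.

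The only subtlety I expect is that the pigeonhole estimate $\binom{a}{2}/(n-1)$ degrades to a trivial bound once $a$ falls below $\sqrt n$, so the recurrence cannot be iterated all the way down to $|A_i| = 0$. This is harmless, though: the $(1+o(1))\log_2 \log_2 n$ bound is already extracted while $|A_i|$ is still much larger than any constant, so the breakdown at small $a$ does not affect the conclusion, and the remaining steps are routine.
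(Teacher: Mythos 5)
Your proposal is correct and follows essentially the same route as the paper: pigeonhole on the $\binom{|A|}{2}$ pairs to find a popular distance, use the fact that $G_d[A]$ is a disjoint union of paths to show Claimer must leave at least half the edges' worth of points unclaimed (the paper phrases this as an upper bound on the independent set of each path rather than via the matching number, but it is the same bound), and iterate the resulting quadratic recurrence to get $(1+o(1))\log_2\log_2 n$. Your closing remark about the recurrence degrading for small $|A_i|$ matches the paper's handling (it simply stops the iteration once fewer than $100$ points remain), and part (i) is likewise the paper's halving argument, stated via paths instead of bipartiteness.
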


\begin{proof}
(i) 
For any set of unclaimed points $A$ and any $d$, $G_d[A]$ is bipartite, so has an independent set with at least half of the points of $A$.
Thus whenever there are $m$ points remaining, Claimer has a move that will leave at most $\lfloor m/2 \rfloor$ points unclaimed.  
If Claimer always makes such a move, then the game will last at most $1 + \log_2 n$ rounds.

(ii) Suppose that in some round the set of unclaimed points is $A$ with $|A| \geq \alpha n \geq 100$.
These points determine (with multiplicity) $\binom {\alpha n} 2$ distances.
Let $d$ be the most commonly occurring distance.
There are $n-1$ possible distances, so $d$ occurs at least
\[
\frac {\alpha n(\alpha n-1)} {2(n-1)} = \frac {\alpha^2n \left(n-\frac 1 \alpha\right)} {2(n-1)}  \geq \frac{0.99\alpha^2 n} 2
\]
times.
Thus $G_d[A]$ is a vertex-disjoint union of paths with at least $0.99\alpha^2 n/2$ edges in total.
From each path of length $m \geq 1$, Claimer can claim at most $\lceil \frac{m+1} 2 \rceil \leq \tfrac m 2 + 1$ points, so there will be at least $0.99\alpha^2 n/4 \geq \alpha^2 n/5$ unclaimed points going into the next round.
Iterating this argument, after $t$ rounds of their greedy strategy starting from $A$, either fewer than $100$ points remain unclaimed, or Namer has defended at least $5(\tfrac \alpha 5)^{2^t}n$ points from being claimed.
Taking $\alpha = 1$, the game will last at least until
\[
\frac {5}{5^{2^t}}n < 100,
\]
or $t > \log_2 (\log_5 n - \log_5 20) = (1+o(1))\log_2 \log_2 n$.
\end{proof}

The proof of the lower bound is almost identical to Gunderson and R\"odl's proof~\cite{GR} of an upper bound for a certain Ramsey number (see Theorem~\ref{ramsey-bounds} in Section~\ref{ramsey}).

It is tempting to believe that the upper bound could be tight for powers of $2$.
The first counterexample is $n=8$, for which the length of the game is $3$.
(This is easy to check exhaustively on a computer.)
It is instructive to consider a sequence of optimal play from each side.

\newcommand{\claim}[1]{{\color{red}\cancel{#1}}}
\[
\begin{array}{ccccccccc}
 d_1=1 : & \claim{1} & {2} & \claim{3} & {4} & \claim{5} & {6} & {7} & \claim{8} \\
 d_2=2 : & & \claim{2} &  & {4} &  & \claim{6} & \claim{7} & \\
 d_3=1 : & &  &  & \claim{4} &  &  &  & \\
\end{array}
\]
The `natural' move for Claimer in response to Namer's $d_1=1$ is to take all of the odd (or even) numbers, but following this pattern commits to the game lasting $1 + \log_2 n = 4$ turns.
By claiming the less uniform set $\{1,3,5,8\}$ Claimer ensures that there are few repeated distances available for Namer in the next round, allowing them to win sooner.

This suggests that Claimer might do better by playing randomly.
Indeed, if the set of unclaimed points $A$ is always a random set of density $\alpha$ then all distances appear roughly equally often and the analysis in Proposition~\ref{greedy}(ii) is tight.
However, it is not at all clear that this situation can be arranged.
For example, suppose that $d_1 = 1$ and that Claimer claims a random maximal independent set of $G_1$ in the first round.
Then the set of unclaimed vertices $A_1$ will contain more points at distance $2$ than would be expected in a random set of its density.
There are similar dependence problems whenever arithmetic relations hold among the set of named distances.

This makes a truly random strategy for Claimer difficult to analyse.
In the remainder of this section I'll present a hybrid strategy with a small amount of randomness that proves an upper bound matching the lower bound from Proposition~\ref{greedy} up to a multiplicative constant.

\subsection{The lazy strategy}

In the \defn{lazy strategy for Claimer}, if the set of unclaimed points is $A$ and Namer names $d$, then Claimer claims all those $x \in A$ such that $x + d \notin A$.
Equivalently, Claimer claims the largest element of each path component of $G_d[A]$.
This is not a good strategy, but it is very easy to analyse.
Indeed, starting from a set of unclaimed points $A_0$ we have
\begin{align}
A_1 & = A_0 \cap (A_0 - d_1) \nonumber\\
A_2 & = A_1 \cap (A_1 - d_2) = A_0 \cap (A_0 - d_1) \cap (A_0 - d_2) \cap (A_0 - d_1 - d_2)\nonumber\\
    & \veqdots \nonumber\\
A_k & = \bigcap_{I \subseteq [k]} \Big(A_0 - \sum_{i \in I} d_i\Big).\label{evolution}
\end{align}
Hence $x \in A_k$ if and only if 
\[
H(x; d_1, \ldots, d_k) = \Big\{x + \sum_{i \in I} d_i : I \subseteq [k]\Big\} \subseteq A_0.
\]
Call such a set a \defn{(Hilbert) cube of dimension $k$}.
(The `side lengths' $d_i$ of a Hilbert cube are sometimes assumed to be distinct, but we do not assume this here.)
Thus $A_k \neq \emptyset$ if and only if $A_0$ contains a cube of dimension $k$.
Unfortuntately, $[n]$ itself is a cube of dimension $n-1$ (with $d_1=\cdots=d_{n-1}=1$), so the lazy strategy only bounds the length of the game by $n$.
The proof of the upper bound in Theorem~\ref{main} could be viewed as a series of contortions for improving the bound given by the lazy strategy.

\subsection{Introducing randomness}

The first idea is to introduce a small, easily controlled, amount of randomness.
Claimer begins by choosing a uniformly random partition $[n] = A_0 \cup B_0$, then plays the lazy strategy starting from $A_0$ followed by the lazy strategy starting from $B_0$.
The effectiveness of this strategy is determined by the size of the largest cube contained in a random subset of $[n]$.
Unfortunately, a random subset of $[n]$ contains intervals of length $\Theta(\log n)$ with high probability, so this strategy proves at best a logarithmic upper bound on the length of the game.
This would be an improvement on the naive lazy strategy, but not on the greedy upper bound from Proposition~\ref{greedy}.

The second idea is to notice that the cubes causing problems are rather atypical: a cube of dimension $k$ could have up to $2^k$ elements, but the intervals causing problems only have size $k+1$.
We would be much happier if the cubes generated by the named distances had the maximum possible size; that is, they are \defn{non-degenerate}.
This would be true if the named distances were sufficiently well spaced.

\begin{observation}\label{spaced}
Let $d_1, \ldots, d_k \in \mathbb N$ with $d_{i+1} \geq 2d_i$ for $1 \leq i \leq k-1$.
Then every cube $H(x;d_1,\ldots,d_k)$ is non-degenerate.\qed
\end{observation}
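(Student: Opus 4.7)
The plan is to prove non-degeneracy by showing that distinct subsets $I, J \subseteq [k]$ necessarily give distinct subset sums $\sum_{i \in I} d_i$ and $\sum_{i \in J} d_i$; this immediately yields $|H(x;d_1,\ldots,d_k)| = 2^k$.

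Suppose for contradiction that $I \neq J$ but $\sum_{i \in I} d_i = \sum_{i \in J} d_i$. Cancelling the common terms, this is equivalent to $\sum_{i \in I \setminus J} d_i = \sum_{i \in J \setminus I} d_i$, with $I \setminus J$ and $J \setminus I$ disjoint and not both empty. Let $m$ be the largest index appearing in $I \triangle J$; without loss of generality $m \in I \setminus J$. Then isolating the $d_m$ term gives the bound
\[
d_m \leq \sum_{i \in J \setminus I} d_i \leq \sum_{i=1}^{m-1} d_i,
\]
since every index in $J \setminus I$ is strictly less than $m$.

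The doubling hypothesis $d_{i+1} \geq 2 d_i$ iterates to $d_{m-j} \leq d_m/2^j$ for $1 \leq j \leq m-1$, so
\[
\sum_{i=1}^{m-1} d_i \leq d_m \sum_{j=1}^{m-1} 2^{-j} < d_m,
\]
contradicting the previous inequality. Hence all $2^k$ subset sums are distinct, and the cube is non-degenerate.

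The argument is entirely routine — the only step requiring a little care is the choice to take $m$ maximal in the symmetric difference (rather than, say, in $I$), which is what lets one side of the equation be dominated by a geometric series in $d_m$. No genuine obstacle is anticipated.
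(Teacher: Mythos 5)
Your proof is correct: the superincreasing condition forces all $2^k$ subset sums $\sum_{i\in I} d_i$ to be distinct, which is exactly what non-degeneracy means, and your symmetric-difference/geometric-series argument is the standard way to see this. The paper states the observation with no proof at all (the \qed is attached to the statement), so you have simply written out the routine argument the author left implicit.
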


Non-degenerate cubes are much less likely to appear in random sets.

\begin{lemma}\label{proper}
Let $k = \lceil \log_2 \log_2 n + 2\log_2 \log_2 \log_2 n\rceil$.
Then there is a partition $[n] = A_0 \cup B_0$ such that neither $A_0$ nor $B_0$ contain a non-degenerate cube of dimension $k$.
\end{lemma}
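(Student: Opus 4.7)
The plan is to apply the probabilistic method. I would form a random partition $[n] = A_0 \cup B_0$ by assigning each element of $[n]$ to $A_0$ or $B_0$ independently with probability $1/2$, and show that with positive probability neither class contains a non-degenerate cube of dimension $k$.

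The first step is the observation that a non-degenerate Hilbert cube of dimension $k$ has exactly $2^k$ distinct elements. Hence a fixed such cube lies entirely in $A_0$ with probability $2^{-2^k}$, and by a union bound it is monochromatic with probability at most $2^{1-2^k}$.

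The second step is to count non-degenerate $k$-cubes in $[n]$. A cube $H(x; d_1, \ldots, d_k)$ is determined by its base point $x \in [n]$ together with its side lengths $d_1, \ldots, d_k \in [n-1]$, so a crude union bound over the $n^{k+1}$ ordered specifications is plenty. The expected number of monochromatic non-degenerate $k$-cubes is therefore at most $n^{k+1} \cdot 2^{1-2^k}$, and it suffices to check that this quantity is less than $1$.

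The last step is the calculation, which is really the whole point of the argument. Setting $L = \log_2 n$ and $M = \log_2 L$, the choice $k = \lceil M + 2\log_2 M \rceil$ gives $2^k \geq L M^2$, while $(k+1) L \leq (M + 2\log_2 M + 2) L$; for $n$ larger than a modest absolute constant the former comfortably exceeds the latter plus $1$, the expected count is less than $1$, and a good partition exists. The $2 \log_2 \log_2 \log_2 n$ correction in the definition of $k$ is exactly what is needed to beat the polynomial factor $n^{k+1}$ from counting cubes: with just $\log_2 \log_2 n$ one would get $2^k \approx L$, which is already dominated by $(k+1) L$, and the method would fail.
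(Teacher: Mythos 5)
Your proposal is correct and is essentially the paper's own argument: a uniformly random bipartition, the observation that a non-degenerate $k$-cube has $2^k$ elements, a first-moment bound of $n^{k+1}2^{1-2^k}$ over ordered specifications $(x,d_1,\ldots,d_k)$, and the same computation showing the $2\log_2\log_2\log_2 n$ term makes $2^k \geq \log_2 n\,(\log_2\log_2 n)^2$ dominate $(k+1)\log_2 n$. The only cosmetic difference is that you conclude via ``expected count less than $1$'' where the paper phrases it as $o(1)$ plus Markov's inequality.
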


\begin{proof}
Let $A$ be a subset of $[n]$ chosen uniformly at random and let $B = [n] \setminus A$.
A cube of dimension $k$ is determined by a choice of $x, d_1, \ldots, d_k$, so the expected number of cubes of dimension $k$ and size $2^k$ in $A$ is at most
\begin{align*}
n^{k+1}\left(\tfrac 1 2\right)^{2^k} = 2^{(k+1)\log_2 n - 2^k} & \leq 2^{(2 \log_2 \log_2 n)\log_2 n - (\log_2 \log_2 n)^2\log_2 n} = o(1).
\end{align*}
The same bound holds for $B$, so the probability that either $A$ or $B$ fail to have the desired property is $o(1)$ by Markov's inequality.
\end{proof}

It follows from Observation~\ref{spaced} and Lemma~\ref{proper} that Claimer has a strategy to win in $2k = O(\log \log n)$ rounds if Namer commits to naming sufficiently well-separated distances.
At the other extreme, Namer commits to only ever naming the same distance $1$, say.
In this case Claimer wins in only two turns, as they could claim the odd numbers in the first round and the even numbers in the second.
We would like to say that some combination of these strategies should handle the entire range of intermediate possibilities.
Lemma~\ref{parallel} is one way of making this precise.

\begin{lemma}\label{parallel}
Let $D_1 \cup \cdots \cup D_m$ be a partition of $[n-1]$ with the following property.
Suppose that Claimer has a strategy $\mathcal S$ to win in $k$ rounds if Namer can name at most one distance from each $D_i$, and, for each $i$, a strategy $\mathcal S_i$ to win in $l$ rounds if Namer can name only distances from $D_i$.
Then Claimer has a strategy to win in $kl$ rounds.
\end{lemma}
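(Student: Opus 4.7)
The plan is to run $\mathcal{S}$ and the $\mathcal{S}_i$ as parallel virtual sub-strategies and dispatch each of Namer's moves to exactly one of them. Call a round \defn{fresh} if it is the first in which Namer names a distance from some $D_i$, and \defn{repeat} otherwise. On a fresh round, with Namer naming $d \in D_i$, Claimer feeds $d$ to the virtual copy of $\mathcal{S}$; on a repeat round Claimer feeds $d$ to the virtual copy of $\mathcal{S}_i$. In both cases the sub-strategy returns a set $C$ independent in $G_d$, and Claimer actually claims $C \cap A$, where $A$ is the current unclaimed set. The sequence of distances $\mathcal{S}$ ever sees contains at most one from each $D_i$, matching its input regime, and the sequence each $\mathcal{S}_i$ sees lies entirely in $D_i$, matching its input regime, so both virtual plays are legal.

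For the analysis, write $P_s$ for the set Claimer proposes in round $s$ (before intersecting with $A$). A one-line induction shows the unclaimed set after $s$ rounds equals $[n] \setminus (P_1 \cup \cdots \cup P_s)$, so the game ends the moment $\bigcup_s P_s = [n]$. Let $f$ denote the number of fresh rounds played so far and $n_i$ the number of rounds in which Namer has named a distance in $D_i$. Then $\bigcup_s P_s$ splits as the union over $j \leq f$ of $\mathcal{S}$'s $j$th proposal together with the union over $i$ and $r' \leq n_i - 1$ of $\mathcal{S}_i$'s $r'$th proposal. Once $f \geq k$, the guarantee on $\mathcal{S}$ forces the first union to equal $[n]$; once $n_i - 1 \geq l$ for some $i$, the guarantee on $\mathcal{S}_i$ forces the second union to equal $[n]$. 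Either event ends the game.

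If neither event has occurred, then $f \leq k - 1$ and $n_i \leq l$ for every $i$, so the total rounds played so far is at most $\sum_i n_i \leq f \cdot l \leq (k-1) l$. The next round is forced to push $f$ to $k$ or some $n_i$ to $l+1$, so the game ends by round $(k-1) l + 1 \leq k l$.

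The main conceptual hurdle is to recognise that the two layers can be run in parallel without interfering: each is only fed inputs lying in its intended regime, and the telescoping identity for the unclaimed set means the partial covers produced by the two layers compose additively rather than one overriding the other.
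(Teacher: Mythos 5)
Your proposal is correct and follows essentially the same approach as the paper: dispatch the first naming from each $D_i$ to $\mathcal S$ and subsequent namings to $\mathcal S_i$, then count at most $k-1$ moves in $\mathcal S$ and at most $l-1$ in each of at most $k-1$ active $\mathcal S_i$, giving $(k-1)l+1\leq kl$. Your extra care in making the virtual plays legal (intersecting each proposed set with the current unclaimed set, and noting the telescoping identity for the unclaimed set) is a correct and welcome elaboration of what the paper leaves implicit in ``plays according to $\mathcal S$''.
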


Crucially, the bound on the performance of the combined strategy does not explicitly depend on $m$.

\begin{proof}
The first time Namer names a distance from $D_i$, Claimer plays according to $\mathcal S$.
Each subsequent time Namer names a distance from $D_i$, Claimer plays according to $\mathcal S_i$.
If the game has not yet ended then Claimer has played at most $k-1$ moves in $\mathcal S$ and at most $l-1$ moves in each of the at most $k-1$ stratgies $\mathcal S_i$ in which they have played any moves at all.
So Claimer wins in at most $(k-1) + (k-1)(l-1) + 1 = (k-1)l+1$ rounds.
\end{proof}

It remains to present sets $D_i$ and strategies $\mathcal S, \mathcal S_i$ to which we can apply Lemma~\ref{parallel}.

\begin{lemma}\label{strategylets}
Let $k = \lceil \log_2 \log_2 n + 2\log_2 \log_2 \log_2 n\rceil$, let $D_1 = \{1,2\}$ and, for $i \geq 2$, let $D_i = [2^{i-1}+1,2^i]$.

(i) If Namer can name at most one distance from each $D_i$, then Claimer has a strategy to win in $4k$ rounds.

(ii) For each $i$, if Namer can name only distances from $D_i$, then Claimer has a strategy to win in $3$ rounds.
\end{lemma}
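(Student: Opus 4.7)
The plan is to handle (ii) offline with a simple $3$-coloring and to handle (i) by combining \lem{proper} with a sub-cube observation.

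For (ii), fix $i$ and set $m = 2^{i-1}$. I will partition $[n]$ into three classes by coloring each consecutive block of $m$ integers according to its index modulo $3$; concretely, set $C_c = \{x \in [n] : \lfloor (x-1)/m \rfloor \equiv c \pmod{3}\}$ for $c \in \{0, 1, 2\}$. For any $d \in D_i$ we have $m \leq d \leq 2m$ (using $D_1 = \{1,2\}$ with $m = 1$, and $D_i = [m+1, 2m]$ for $i \geq 2$), so the block index of $x + d$ exceeds that of $x$ by exactly $1$ or $2$. These values are distinct modulo $3$, so each $C_c$ is $d$-free for every $d \in D_i$. Claimer therefore wins in $3$ rounds by claiming $C_0$, $C_1$, $C_2$ in any fixed order, regardless of what Namer names.

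For (i), I apply \lem{proper} to fix a partition $[n] = A_0 \cup B_0$ with neither part containing a non-degenerate cube of dimension $k$. The key observation to invoke is: if $e_1, \ldots, e_{2k}$ are distances coming from $2k$ distinct sets $D_i$, then sorting them in increasing order and extracting every other entry yields $k$ distances whose $D_i$-indices differ pairwise by at least $2$. Such a selection is factor-$2$ separated, so by Observation~\ref{spaced} these $k$ distances form a non-degenerate cube of dimension $k$. This cube is a sub-cube of $H(x; e_1, \ldots, e_{2k})$, so our partition guarantees that neither $A_0$ nor $B_0$ contains any cube $H(x; e_1, \ldots, e_{2k})$ built from $2k$ distinct $D_i$'s.

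Claimer's strategy is then to play the lazy strategy on $A_0$ for the first $2k$ rounds and on $B_0$ for the next $2k$ rounds. By formula~\eqref{evolution}, after the first $2k$ lazy moves the unclaimed remnant of $A_0$ equals $\{x : H(x; d_1, \ldots, d_{2k}) \subseteq A_0\}$; the hypothesis of (i) places $d_1, \ldots, d_{2k}$ in $2k$ distinct sets $D_i$, so the sub-cube observation forces this remnant to be empty and all of $A_0$ is claimed. The same argument applied to $d_{2k+1}, \ldots, d_{4k}$ clears $B_0$ in a further $2k$ rounds, for a total of $4k$. The step I expect to require the most care is the sub-cube observation itself; once that is in hand, the legality of each lazy move (the claimed set is in fact independent in $G_{d_t}$ on the whole current unclaimed set, not only on $A_0$ or $B_0$) follows from a routine check.
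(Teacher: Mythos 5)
Your proposal is correct and follows essentially the same route as the paper: part (ii) uses the same length-$2^{i-1}$ blocks coloured by index mod $3$, and part (i) combines Lemma~\ref{proper} with the lazy strategy on $A_0$ then $B_0$, extracting every other named distance to obtain a factor-$2$-separated (hence non-degenerate) sub-cube via Observation~\ref{spaced}. No substantive differences to report.
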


\begin{proof}
(i)
By Lemma~\ref{proper}, there is a partition $[n] = A_0 \cup B_0$ such that neither $A_0$ nor $B_0$ contains a non-degenerate cube of dimension $k$.

For the first $2k$ rounds of the game, Claimer plays the lazy strategy starting from $A_0$.
Let $d_1 < \cdots < d_{2k}$ be the named distances.
Then by \eqref{evolution},
\[
A_{2k} = \bigcap_{I \subseteq [2k]} \Big(A_0 - \sum_{i \in I} d_i\Big)
 \subseteq \bigcap_{I \subseteq \{2,4,\ldots,2k\}} \Big(A_0 - \sum_{i \in I} d_i\Big).
\]
Thus if there is an $x \in A_{2k}$, then $A_0$ contains a cube $H(x;d_2,d_4,\ldots,d_{2k})$.
Since the $d_i$ are from distinct $D_i$, for $1 \leq i \leq k-1$ we have  $d_{2(i+1)} \geq 2d_{2i}$, whence this cube is non-degenerate by Observation~\ref{spaced}.
But there are no such cubes in $A_0$, so $A_{2k}$ must be empty and Claimer claims all of $A_0$ in the first $2k$ rounds.
Claimer claims all of $B_0$ in the next $2k$ rounds similarly.

(ii)
For each $i$, let $A^i_0 \cup A^i_1 \cup A^i_2$ be the partition of $\mathbb N$ with
\[
A^i_j = \bigcup_{k=0}^\infty \big([2^{i-1}] + (3k+j)2^{i-1}\big).
\]
If $d \in D_i$, then no two points of $A^i_j$ differ by $d$, as all distance between points of $A^i_j$ are at most $2^{i-1}$ or at least $2^i+1$.
Thus Claimer can claim $A^i_0\cap [n]$ in round $1$, $A^i_1\cap [n]$ in round $2$ and $A^i_2\cap [n]$ in round $3$.
\end{proof}

The desired upper bound is a direct application of Lemmas~\ref{parallel} and~\ref{strategylets}.

\begin{corollary}\label{upper-bound}
Claimer has a strategy to win in at most $(1+o(1))12\log_2 \log_2 n$ rounds.\qed
\end{corollary}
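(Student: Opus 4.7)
The plan is to apply Lemma~\ref{parallel} to the partition of $[n-1]$ supplied by Lemma~\ref{strategylets}, taking the two strategies delivered by parts~(i) and~(ii) of that same lemma as the inputs $\mathcal S$ and $\mathcal S_i$ respectively. Beyond this, no new construction is required; the corollary is essentially a bookkeeping exercise.

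Concretely, I would set $k = \lceil \log_2 \log_2 n + 2 \log_2 \log_2 \log_2 n \rceil$ and take $D_1 = \{1,2\}$, $D_i = [2^{i-1}+1, 2^i]$ for $i \geq 2$; these sets cover $[n-1]$, so Lemma~\ref{parallel} can be invoked with this partition. From Lemma~\ref{strategylets}(i) I would read off a Claimer strategy $\mathcal S$ winning in $4k$ rounds whenever Namer uses at most one distance from each $D_i$, and from Lemma~\ref{strategylets}(ii) a three-round Claimer strategy $\mathcal S_i$ against any Namer confined to $D_i$. Feeding these into Lemma~\ref{parallel} with its parameters set to $4k$ and $3$ produces a Claimer strategy winning in at most $4k \cdot 3 = 12k$ rounds.

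Expanding the definition of $k$ gives
\[
12k = 12\log_2 \log_2 n + O(\log_2 \log_2 \log_2 n) = (1+o(1)) \cdot 12 \log_2 \log_2 n,
\]
which is the claimed bound.

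The only place where there could conceivably be friction is that the partition $\{D_i\}$ contains $\Theta(\log n)$ parts; a naive combination in which Claimer paid for each part separately would blow up the round count by an additional logarithmic factor and destroy the target bound. The essential content of Lemma~\ref{parallel}, highlighted in the remark immediately following its statement, is precisely that the combined cost does not depend on the number of parts $m$. Granted that input, this corollary is pure arithmetic and there is no real obstacle.
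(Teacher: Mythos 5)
Your proposal is correct and matches the paper's intended argument exactly: the corollary is proved by feeding the partition and strategies of Lemma~\ref{strategylets} into Lemma~\ref{parallel} with parameters $4k$ and $3$, giving $12k = (1+o(1))12\log_2\log_2 n$. The paper marks the corollary with a \qed precisely because it is this direct application, so no further comment is needed.
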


\section{Ramsey theory}\label{ramsey}

We might ask whether Lemma~\ref{proper} can be strengthened to provide a partition into pieces that do not contain any Hilbert cubes, degenerate or otherwise.
This is precisely a question of Ramsey theory.
Write $h(k,r)$ for the least $n$ such that whenever $[n]$ is partitioned into $r$ parts, at least one of the parts contains a cube of dimension $d$.

\begin{proposition} \label{use-ramsey}
Let $r\geq 2$ and let $k$ be least with $h(k,r) > n$.
Then Claimer has a strategy to win in at most $rk$ rounds.
\end{proposition}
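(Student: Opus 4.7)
The plan is to follow the template of \lem{strategylets}(i), replacing the two-part partition supplied by \lem{proper} with an $r$-part partition coming directly from the definition of $h(k,r)$, and dropping the non-degeneracy hypothesis because the Ramsey bound controls all cubes simultaneously.

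Since $h(k,r) > n$, there is a partition $[n] = A^{(1)} \cup \cdots \cup A^{(r)}$ no part of which contains a Hilbert cube of dimension $k$. I would have Claimer split the game into $r$ phases of $k$ rounds apiece, and in phase $j$ play the lazy strategy relative to $A^{(j)}$: when Namer names $d$, Claimer claims the set of $x \in A^{(j)}$ that are currently unclaimed but whose translate $x+d$ either lies outside $A^{(j)}$ or has already been claimed. Since Claimer never touches points outside $A^{(j)}$ during phase $j$, an easy induction on $j$ shows that $A^{(j)}$ is entirely unclaimed at the start of phase $j$, so the derivation leading to \eqref{evolution} applies verbatim within $A^{(j)}$.

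Concretely, if $d_{(j-1)k+1},\ldots,d_{jk}$ are the distances Namer chooses in phase $j$, then the unclaimed portion of $A^{(j)}$ at the end of phase $j$ is
\[
\bigcap_{I \subseteq \{(j-1)k+1,\ldots,jk\}} \Big(A^{(j)} - \sum_{i \in I} d_i\Big).
\]
Any $x$ in this intersection would witness a Hilbert cube $H(x; d_{(j-1)k+1},\ldots,d_{jk}) \subseteq A^{(j)}$ of dimension $k$, contradicting the choice of partition. Hence the intersection is empty, all of $A^{(j)}$ is claimed by the end of phase $j$, and after all $r$ phases the whole of $[n]$ has been claimed in $rk$ rounds.

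I do not expect a real obstacle here; the proposition is essentially an abstraction of the $r=2$ argument already carried out in \lem{strategylets}(i). The only points requiring care are routine: verifying that Claimer's move is legal (the set claimed in each round of phase $j$ is independent in $G_d$ because it consists of the largest element of each path component of $G_d$ on the currently unclaimed portion of $A^{(j)}$), and noting that confining Claimer to $A^{(j)}$ during phase $j$ does not prevent them from claiming the other parts in subsequent phases.
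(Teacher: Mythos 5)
Your proof is correct and is essentially the paper's own argument: take the $r$-part cube-free partition guaranteed by $h(k,r) > n$ and run the lazy strategy on each part in turn, using \eqref{evolution} to conclude that each part is exhausted within $k$ rounds. The extra details you supply (legality of each move, the induction showing each part is untouched until its phase) are routine verifications the paper leaves implicit.
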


\begin{proof}
By the choice of $k$ there is a partition $[n] = A_1 \cup \cdots \cup A_r$ such that no $A_i$ contains a cube of dimension $k$.
By following the lazy strategy starting from $A_1$, Claimer can claim all of $A_1$ in at most $k$ rounds.
By repeating for each of $A_2, \ldots, A_r$, Claimer wins in at most $rk$ rounds.
\end{proof}

To apply Proposition~\ref{use-ramsey} we would require lower bounds on $h(k,r)$.
The best known bounds for large $r$ are due to Gunderson and R\"odl.

\begin{theorem}[{\cite[Theorem~2.5]{GR}}]\label{ramsey-bounds}
For $k\geq 3$ and $r \geq 2$,
\[
r^{(1+o(1))\frac {2^k-1} k} \leq h(k,r) \leq (2r)^{2^k-1},
\] 
with $o(1) \to 0$ as $r \to \infty$.
\end{theorem}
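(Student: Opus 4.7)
The plan is to prove the two bounds in Theorem~\ref{ramsey-bounds} by essentially dual pigeonhole and probabilistic arguments.

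For the upper bound $h(k,r) \leq (2r)^{2^k-1}$, given an $r$-colouring of $[n]$ I would first pass to the largest colour class $A_0$, of size $|A_0| \geq n/r$, and then iteratively build up a monochromatic cube by popular-difference pigeonhole. At stage $i$, given a set $A_{i-1} \subseteq A_0$ of size $m$, the $\binom{m}{2}$ positive differences within $A_{i-1}$ are distributed over at most $n-1$ values, so some $d_i$ achieves $|A_i| := |A_{i-1} \cap (A_{i-1} - d_i)| \geq m(m-1)/(2(n-1))$, and any element of $A_k$ then serves as the base point of a monochromatic cube with side lengths $d_1, \ldots, d_k$. This is exactly the recursion $m_i \geq m_{i-1}^2/(2n)$ that already drives the proof of Proposition~\ref{greedy}(ii) (an equivalence the paper flags explicitly): iterating from $m_0 = n/r$ keeps $m_k \geq 1$ as soon as $n$ exceeds the claimed $(2r)^{2^k-1}$ up to small constants.

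For the lower bound $h(k,r) \geq r^{(1+o(1))(2^k-1)/k}$, I would colour $[n]$ uniformly at random with $r$ colours and apply the symmetric Lovász Local Lemma to the family of events $E_H$ that a Hilbert cube $H$ of dimension $k$ is monochromatic. For a non-degenerate cube one has exactly $\Pr[E_H] = r^{1-2^k}$, and two such events are independent whenever the cubes are point-disjoint. A given point of $[n]$ can occupy any of the $2^k$ vertex positions of a cube, and once its position is fixed the cube is determined by its $k$ side lengths, so $E_H$ is dependent with at most $D \leq 2^{2k} n^k$ other events. The symmetric LLL condition $e \cdot r^{1-2^k}(D+1) \leq 1$ rearranges to $n \leq c(k) \, r^{(2^k-1)/k}$; since $k$ is fixed and $r \to \infty$, the constant $c(k)$ is absorbed into the $(1+o(1))$ slack in the exponent.

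The main obstacle I foresee is handling degenerate Hilbert cubes in the lower bound: their formal $2^k$ vertices collapse to fewer distinct points, inflating $\Pr[E_H]$, so they are not directly controlled by the LLL input above. The fix is to apply the LLL simultaneously to non-degenerate cubes of all dimensions $1 \leq j \leq k$ (since a degenerate $k$-cube always contains a non-degenerate sub-cube of strictly smaller dimension) and check that the $j = k$ term dominates, which it should given the exponential gap between $2^j - 1$ and $2^k - 1$ for $j < k$. Verifying this dominance carefully, and squeezing the LLL constants to yield precisely the $(1+o(1))$ factor in the exponent, is the main technical effort; both are routine but fiddly.
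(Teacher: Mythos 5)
This theorem is quoted from Gunderson and R\"odl rather than proved in the paper, but the paper does indicate the shape of both arguments, and your upper bound is the intended one: the iterated popular-difference pigeonhole is exactly the computation reprised in Proposition~\ref{greedy}(ii). (Your bookkeeping actually lands at roughly $r(2r)^{2^k-1}$ rather than $(2r)^{2^k-1}$, but that is a matter of care with constants, not of substance.)

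The lower bound is where there is a genuine gap, and it is precisely the issue the paper flags in Section~\ref{ramsey}. A uniformly random $r$-colouring cannot avoid monochromatic \emph{degenerate} cubes at the density you need: a colour class of density $1/r$ contains, with high probability, an interval of length about $\log n/\log r$, and an interval of length $k+1$ is already a degenerate cube of dimension $k$ (all side lengths equal to $1$). At $n \approx r^{(2^k-1)/k}$ such intervals have length about $(2^k-1)/k \gg k+1$, so the random colouring fails with high probability and no application of the Local Lemma can rescue it. Your proposed fix --- applying the LLL also to non-degenerate cubes of every dimension $j<k$ --- makes matters worse rather than better: the LLL condition at dimension $j$ forces $n \lesssim r^{(2^j-1)/j}$, which for small $j$ is far more restrictive than the target (already $j=1$ would force $n \lesssim r$), so the dominance runs opposite to the direction you claim. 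Moreover, the largest non-degenerate sub-cube of an interval of length $k+1$ has dimension only about $\log_2 k$, so even catching intervals this way would cap $n$ at roughly $r^{(k+1)/\log_2 k}$. What Gunderson and R\"odl actually do is observe that every degenerate cube contains a three-term arithmetic progression, and therefore first refine the colouring using a partitioning variant of Behrend's construction into classes containing no $3$-AP at all; only after degenerate cubes have been eliminated deterministically do they treat non-degenerate cubes probabilistically, which is the one part of your lower-bound argument that is sound.
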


In their proof of the lower bound, Gunderson and R\"odl mostly work with non-degenerate cubes.
It is easy to show that a degenerate cube contains an arithmetic progression of length $3$,
so they begin by using their own partitioning variant of Behrend's~\cite{Behrend} construction to partition $[n]$ into a large number of parts that do not contain any arithmetic progressions of length $3$.
As a result, $r \geq \exp(\Omega(\sqrt{\log n}))$, and the bounds obtained from Proposition~\ref{use-ramsey} are extremely poor.

For small $r$, a random $r$-partition along the lines of Lemma~\ref{proper} shows that $h(k,r) \geq r^{\Omega(k)}$.
Nothing better is known.
For the variant problem where we only want to avoid cubes with distinct side lengths this was strenghtened to $r^{\Omega(k^2)}$ by Conlon, Fox and Sudakov, using an inverse Littlewood--Offord theorem of Tao and Vu~\cite{TaoVu} to characterise cubes with far fewer than $2^k$ elements.

For all of the known Ramsey-type results, the major difficulties are presented by non-degenerate cubes.
Corollary~\ref{upper-bound} is stronger than the bound following from Proposition~\ref{use-ramsey} because we were able to avoid considering degenerate cubes at the modest cost of increasing our bound on the length of the game by a factor of $6$.

\newenvironment{acknowledgements} {\renewcommand\abstractname{Acknowledgements}\begin{abstract}} {\end{abstract}}

\begin{acknowledgements}
\noindent I would like to thank James Aaronson, Joshua Erde and Julia Wolf for helpful discussions.
\end{acknowledgements}

\end{document}